\newtheorem{thm}{Theorem}[section]
\newtheorem{prop}[thm]{Proposition}
\newtheorem{lem}[thm]{Lemma}
\newtheorem{notation}[thm]{Notation}
\newtheorem*{eulerclassone}{Euler Class One Conjecture}
\theoremstyle{remark}
\newtheorem{remark}[thm]{Remark}
\newtheorem{ex}[thm]{Example}
\theoremstyle{definition}
\newtheorem{definition}[thm]{Definition}
\newcommand\C{\mathbb{C}}
\newcommand\R{\mathbb{R}}
\newcommand\rot{\operatorname{rot}}
\tikzset{every picture/.style=thick}
\newcommand\blfootnote[1]{%
\begingroup
\renewcommand\thefootnote{}\footnote{#1}%
\addtocounter{footnote}{-1}%
\endgroup
}
\begin{document}

\title{Thurston norm and Euler classes of tight contact structures}

\author[Steven Sivek]{Steven Sivek}
\address{Department of Mathematics\\Imperial College London}
\email{s.sivek@imperial.ac.uk}
\address{Max Planck Institute for Mathematics}
\email{ssivek@mpim-bonn.mpg.de}

\author[Mehdi Yazdi]{Mehdi Yazdi}
\address{Department of Mathematics\\King's College London}
\email{mehdi.yazdi@kcl.ac.uk}

\date{}
\thanks{}

\begin{abstract}
Bill Thurston proved that taut foliations of hyperbolic 3-manifolds have Euler classes of norm at most one, and conjectured that any integral second cohomology class of norm equal to one is realised as the Euler class of some taut foliation. Recent work of the second author, joint with David Gabai, has produced counterexamples to this conjecture.  Since tight contact structures exist whenever taut foliations do and their Euler classes also have norm at most one, it is natural to ask whether the Euler class one conjecture might still be true for tight contact structures.
In this short note, we show that the previously constructed counterexamples for Euler classes of taut foliations in \cite{yazdi2020thurston} are in fact realised as Euler classes of tight contact structures. This provides some evidence for the Euler class one conjecture for tight contact structures.
\end{abstract}

\maketitle

\section{Introduction} \blfootnote{MSC 2020 Subject Classification: 57K31, 57K33, 57K32}%
For any compact, orientable 3-manifold $M$, Thurston defined a semi-norm on the second real homology groups $H_2(M)$ and $H_2(M , \partial M)$. For a  compact surface $S$ with components $S_1 , \cdots, S_n$, define the \emph{negative part of the Euler characteristic} as 
\begin{eqnarray*}
\chi_-(S) = \sum_{i=1}^{n} \max\{ 0 , - \chi(S_i) \}.
\end{eqnarray*}

In words, $\chi_-(S)$ is the absolute value of the Euler characteristic after discarding any disc and sphere components. The \emph{Thurston norm} of an integral class $a \in H_2(M)$ (or $a \in H_2(M, \partial M)$) is defined by taking the minimum of $\chi_-(S)$ over all (properly) embedded compact orientable surfaces $S \subset M$ with homology class $[S]=a$. The norm of a rational point $a$ is defined by scaling, and the norm of a real point $a$ is defined continuously. The Thurston norm on $H_2(M)$ and $H_2(M , \partial M)$ defines norms on the respective dual vector spaces $H^2(M)$ and $H^2(M , \partial M)$. 

By a \emph{foliation} of a 3-manifold, we mean a two-dimensional foliation, i.e., a decomposition of the 3-manifold into injectively immersed surfaces that locally has the structure of a product $\mathbb{R}^2 \times \mathbb{R}$ with leaves $\mathbb{R}^2 \times \text{point}$. A foliation is \emph{transversely orientable} if there is a consistent choice of transverse orientations on the leaves. By Wood, on a closed orientable 3-manifold, every transversely orientable plane field is homotopic to the tangent plane field of a foliation \cite{wood1969foliations}. In particular, Wood's theorem implies that when $M$ is closed and orientable, every cohomology class $a \in 2 H^2(M ; \mathbb{Z})$ is realised as the Euler class of the tangent plane bundle to a transversely oriented foliation on $M$. See \cite[Theorem 8.1]{yazdi2020thurston} for this deduction, originally due to Wood.

A transversely oriented foliation of a 3-manifold is \emph{taut} if for every leaf $L$ there is a circle $c_L$ intersecting $L$ and transverse to the foliation. Manifolds that admit taut foliations have similar properties to hyperbolic 3-manifolds; for example if $M \neq S^2 \times S^1$ is orientable then $M$ is irreducible \cite{novikov1965topology,rosenberg1968foliations} and has infinite fundamental group \cite{novikov1965topology}. 

From now on we assume that $M$ is closed, orientable, and irreducible; there are generalisations for manifolds with boundary.  Thurston \cite{thurston1986norm} studied the relation between taut foliations and Thurston norm and proved his celebrated theorem that compact leaves of taut foliations are \emph{norm-minimising}. He showed this by establishing the inequality
\begin{eqnarray}
|\langle e(\mathcal{F}) , [S]\rangle | \leq |\chi_-(S)|, 
\label{inequality}
\end{eqnarray}
where $S$ is any compact embedded oriented surface and $e(\mathcal{F}) \in H^2(M ; \mathbb{R})$ is the Euler class of the oriented tangent plane bundle of the taut foliation $\mathcal{F}$. In other words, the Euler class of a taut foliation has dual norm at most one. Note this is in contrast with the case of general foliations as discussed previously. An integral class $a \in H^2(M ; \mathbb{Z})$ satisfies the \emph{parity condition} if $a \in 2 H^2(M ; \mathbb{Z})$. In particular, the Euler class of a transversely oriented plane field over a compact orientable 3-manifold satisfies the parity condition. See \cite[Proposition 3.12]{yazdi2020thurston} for this deduction. Thurston conjectured the following as a converse to his theorem, where $x^*$ denotes the dual norm. Thurston was aware of the parity condition, and so we implicitly assume the parity condition as part of the hypotheses of the conjecture. 

\begin{eulerclassone}[Thurston, 1976]
Let $M$ be a closed, orientable, irreducible and atoroidal 3-manifold and let $a$ be any integral class in $H^2(M; \mathbb{Z})$ with $x^*(a)=1$. Then there is a taut foliation $\mathcal{F}$ on $M$ whose Euler class is equal to $a$.
\end{eulerclassone}	

In \cite{yazdi2020thurston}, the second author constructed counterexamples to the above conjecture, assuming the \emph{fully marked surface theorem}. In \cite{gabai2020fullymarked}, David Gabai and the second author proved the fully marked surface theorem, and completed the negative answer to the Euler class one conjecture. Paraphrasing the negative answer to the above conjecture, there are infinitely many closed hyperbolic 3-manifolds $M$ for which some second cohomology class $a \in 2H^2(M;\mathbb{Z})$ with dual norm one is not realised as the Euler class of any taut foliation on $M$. 

Tight contact structures resemble taut foliations in many ways, while overtwisted contact structures are in analogy with foliations that have \emph{Reeb components}.  In particular, Eliashberg proved that a contact structure is tight if and only if it satisfies a relative version of Inequality \eqref{inequality}. Every $C^0$ taut foliation can be $C^0$-approximated by a tight contact structure, by the work of Eliashberg and Thurston \cite{eliashberg1998confoliations}, Kazez and Roberts \cite{kazez2015approximating}, and Bowden \cite{bowden2016approximating}. As a result, for any fixed closed, orientable, irreducible 3-manifold $M$, the set of Euler classes of tight contact structures on $M$ includes the set of Euler classes of taut foliations. There are examples of 3-manifolds for which the inclusion is strict; for example $S^3$ does not have any taut foliation \cite{novikov1965topology}, while it supports a unique tight contact structure \cite{bennequin1982entrelacements, eliashberg1992contact}. In \cite{yazdi2020thurston}, the second author asked whether the Euler class one conjecture has a positive answer for tight contact structures. 

Colin, Giroux, and Honda \cite{colin2009finitude} proved that on any closed oriented 3-manifold $M$, there are only finitely many homotopy classes of plane fields that carry a tight contact structure. Fixing a trivialization $\tau$ of $TM$, the homotopy class of a transversely oriented plane field $\xi$ over $M$ is determined by the data of a cohomology class $\Gamma_\tau(\xi) \in H^2(M ; \mathbb{Z})$ such that $2 \Gamma_\tau(\xi)$ is equal to the Euler class $e(\xi)$, and an element of the affine space $\mathbb{Z}/d(\xi)\mathbb{Z}$, where $d(\xi)$ is the divisibility of  $e(\xi)$ modulo torsion. See \cite[Proposition~4.1]{gompf1998handlebody}. Therefore, understanding possible values of Euler classes of tight contact structures would be a key step in classifying homotopy classes of tight contact structures on 3-manifolds. 

In this work, we investigate the constructed counterexamples in \cite{yazdi2020thurston}, and provide support for the Euler class one conjecture for tight contact structures. First, we need to briefly recall the setup from \cite[Section 4]{yazdi2020thurston}.

Let $S$ be a closed orientable surface of genus $g \geq 3$, and let $\gamma \subset S$ be a non-separating simple closed curve. Starting with a fibered hyperbolic 3-manifold $M_f$ with fiber $S$ and monodromy $f \colon S \rightarrow S$, the final manifold $M$ is the result of Dehn surgery on $\gamma \subset S \subset M_f$. The monodromy map $f$ and the Dehn surgery are chosen in a specific way to ensure that 
\begin{enumerate}
	\item $H_2(M_f)$ and $H_2(M)$ have respectively rank 1 and 2.
	\item $H_2(M;\mathbb{Z})$ is generated by surfaces $S$ and $F$, where $S$ is a copy of the fiber in $M$, and $F$ is a closed orientable surface of genus 2. Moreover, the unit ball of the Thurston norm and the dual Thurston norm are as in Figure \ref{unitballs}.
\end{enumerate}

\begin{figure}
\begin{tikzpicture}
\draw (-3,0) -- (3,0) node[below] {$(\frac{1}{2},0)$} (0,-0.85) -- (0,0.85) node[above] {$(0,\frac{1}{2g-2})$};
\draw[blue,densely dotted,very thick] (-2.5,0) coordinate (A) -- (0,0.5) coordinate (B) -- (2.5,0) coordinate (C) -- (0,-0.5) coordinate (D) -- cycle;
\foreach \e in {A,B,C,D} { \draw[fill=black] (\e) circle (0.075); }
\draw (4,0) coordinate (hmid) -- (6.75,0) node[below] {$(2,0)$} (5.375,-1.65) coordinate (vmid) -- (5.375,1.65) node[above] {$(0,2g-2)$};
\draw[blue,very thick] (4.5,-1.5) coordinate (G) rectangle (6.25,1.5) coordinate (H);
\foreach \e in {G,H,G|-H,H|-G,G|-hmid,H|-hmid,vmid|-G,vmid|-H} { \draw[fill=black] (\e) circle (0.075); }
\end{tikzpicture}
\caption{Unit ball of the Thurston norm (left), and the dual Thurston norm (right). The vertical and horizontal axes correspond to the surfaces $S$ and $F$, respectively.}
\label{unitballs}
\end{figure}
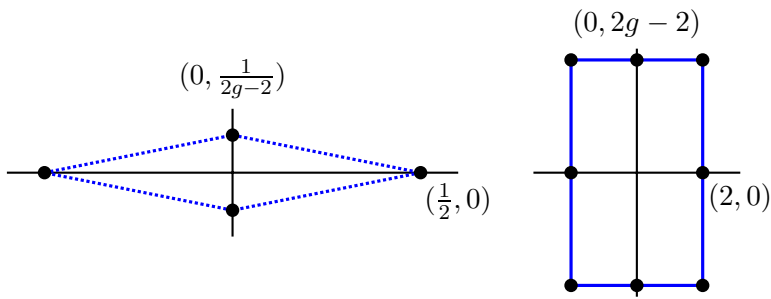

In particular, the point $(0,2-2g) \in 2H^2(M ; \mathbb{Z})$ has dual norm equal to 1. It is shown in \cite{yazdi2020thurston} that the class $(0,2-2g)$ is not realised as the Euler class of any taut foliation on $M$, hence providing counterexamples to the Euler class one conjecture. In this work, we show that the cohomology class $(0,2-2g)$ is indeed realised as the Euler class of a \emph{negative} tight contact structure on $M$, or equivalently of a (positive) tight contact structure on $-M$.  

\begin{thm}
	There is a weakly symplectically fillable contact structure $\xi_0$ on $-M$ that satisfies
	\[ \langle e(\xi_0), [S]\rangle = 2-2g \quad\text{and}\quad \langle e(\xi_0),[F]\rangle = 0. \]
	\label{thm: main}
\end{thm}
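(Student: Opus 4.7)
The plan is to construct $\xi_0$ on $-M$ by perturbing the fibration of $-M_f$ by copies of $S$ to a weakly symplectically fillable contact structure on $-M_f$, and then performing a Legendrian surgery along a realisation of $\gamma$ to arrive at $-M$. The fibration is a taut foliation of $-M_f$, so by the Eliashberg--Thurston perturbation theorem it admits a $C^0$-small deformation to a weakly symplectically fillable positive contact structure $\xi_f$ on $-M_f$ homotopic to the foliation as an oriented $2$-plane field. Because the tangent plane field of the foliation is $TS$ along each fiber, the homotopy invariance of the Euler class gives
\[\langle e(\xi_f), [S] \rangle \;=\; \langle e(T\mathcal{F}), [S]\rangle \;=\; \chi(S) \;=\; 2-2g.\]

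Next, apply Giroux's Legendrian realisation principle to isotope $\gamma \subset S$ to a Legendrian curve on the convex fiber $S$. By a suitable combination of positive and negative stabilisations, the two invariants $\operatorname{tb}(\gamma)$ and $r(\gamma)$ can be modified so that $\operatorname{tb}(\gamma) - 1$ agrees with the integer Dehn surgery coefficient producing $-M$ from $-M_f$, while simultaneously $r(\gamma) = 0$. Performing Legendrian $(-1)$-surgery along the resulting Legendrian then yields a contact structure $\xi_0$ on $-M$, and weak symplectic fillability is preserved under this operation by the work of Eliashberg, Gay, and Etnyre--Honda.

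Finally, compute the two Euler class pairings. A parallel pushoff of $S$ in $-M_f$ is disjoint from $\gamma$, so the surgery does not alter the pairing, giving $\langle e(\xi_0), [S] \rangle = \langle e(\xi_f), [S] \rangle = 2-2g$. For the new homology class $[F]$, the change-of-Euler-class formula for contact $(-1)$-surgery (equivalently, the formula $\langle c_1(W), [\hat F]\rangle$ for the Weinstein $2$-handle cobordism $W$) decomposes the pairing into $\langle e(\xi_f), [F']\rangle$, where $F' \subset -M_f$ is a pre-surgery representative, plus $r(\gamma)$ times the intersection of $F$ with the belt sphere of the handle. Since $r(\gamma) = 0$, this reduces to $\langle e(\xi_f), [F']\rangle$, which one verifies to vanish by a direct topological computation using the homotopy class of $\xi_f$ as the plane field $T\mathcal{F}$ and the explicit homology description of $F$ from \cite{yazdi2020thurston}.

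The main obstacle is the simultaneous realisability of the two conditions on the Legendrian representative of $\gamma$: one must achieve both $\operatorname{tb}(\gamma) - 1$ equal to the prescribed surgery coefficient and $r(\gamma) = 0$. This amounts to verifying a mod-$2$ parity condition on $r$, together with a lower bound on $\operatorname{tb}_{\max}(\gamma)$ in $(-M_f, \xi_f)$ guaranteeing that sufficiently many stabilisations are available; both conditions depend on the specific choices of monodromy $f$ and surgery coefficient in \cite{yazdi2020thurston}. A secondary subtlety is the vanishing $\langle e(\xi_f), [F'] \rangle = 0$, which requires tracking the homology class of $F$ carefully through the surgery description.
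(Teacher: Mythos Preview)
Your overall strategy matches the paper's: start from a weakly fillable contact structure on $M_{f^{-1}} \cong -M_f$ with $\langle e,[S]\rangle = 2-2g$, Legendrian--realise $\gamma$ on a convex fiber, stabilise to hit the correct surgery slope, and do Legendrian surgery.  However, the two items you flag as ``obstacles'' are in fact the whole content of the proof, and you have not resolved either one.  The paper does resolve them, and the mechanism is worth knowing.

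\textbf{Controlling the contact framing.}  You obtain $\xi_f$ by Eliashberg--Thurston perturbation, which gives no control over the dividing set of a convex fiber, hence no control over $\operatorname{tw}(\gamma,S)$.  The paper instead uses the explicit Honda--Kazez--Mati\'{c} contact structure on $M_{f^{-1}}$, whose convex fiber $S_{1/2}$ has dividing set consisting of two parallel copies of a curve $c$ chosen disjoint from $\gamma$.  This forces $\operatorname{tw}(\gamma,S_{1/2})=0$, so exactly two stabilisations are needed to reach the $-3$ surgery coefficient (computed in a separate lemma) that produces $-M$.

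\textbf{Forcing the rotation number to vanish.}  This is the heart of the matter, and your proposal does not address it beyond noting the parity issue.  With two stabilisations available, the achievable rotation numbers are $r_0-2,\, r_0,\, r_0+2$, where $r_0=\rot(\lambda,[T])$ is the rotation number of the unstabilised Legendrian with respect to the Seifert surface $T=F\setminus D^\circ$.  You would need $r_0=0$, and a ``direct topological computation'' of $r_0$ from the plane field $T\mathcal F$ is not straightforward.  The paper sidesteps any such computation: it performs \emph{all three} double stabilisations, obtaining three weakly fillable (hence tight) contact structures $\xi_-,\xi_0,\xi_+$ on $-M$ with $\langle e(\xi_\circ),[F]\rangle = r_0-2,\,r_0,\,r_0+2$ respectively.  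Eliashberg's inequality then gives $|r_0\pm 2|\le -\chi(F)=2$, which forces $r_0=0$.

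A smaller point: your decomposition of $\langle e(\xi_0),[F]\rangle$ into a term $\langle e(\xi_f),[F']\rangle$ plus a rotation contribution is not well posed, since $b_2(M_{f^{-1}})=1$ and there is no closed surface $F'$ representing the new class before surgery.  The correct identity, coming from Gompf's Chern class formula for the Stein $2$-handle, is simply $\langle e(\xi_0),[F]\rangle = \rot(\lambda_0,[T])$; the entire Euler class pairing \emph{is} the rotation number of the stabilised Legendrian.
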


See Definition~\ref{def: weakly fillable} for the definition of a weakly symplectically fillable contact structure. In particular, the contact structure $\xi_0$ is tight. This shows that the Euler class one conjecture has a chance of being true for tight contact structures: in other words, every class of dual norm one could be the Euler class of a tight contact structure $\xi$ on $M$, where we allow $\xi$ to be either positive or negative.

\begin{remark}
The contact structure $\xi_0$ constructed in Theorem~\ref{thm: main} is virtually overtwisted, as explained in Remark~\ref{rem:virtually-ot}; this means that while $\xi_0$ is tight, it lifts to an overtwisted contact structure on some finite cover of $-M$.  By contrast, the contact structures that are $C^0$-close to taut foliations remain tight even when lifted to the universal cover \cite[Corollary~3.2.8]{eliashberg1998confoliations}.  It is therefore also interesting to ask whether the Euler class one conjecture holds for \emph{universally tight} contact structures; this stronger version of the conjecture might well be false.
\end{remark}

\begin{remark}
Another natural question is whether other points on the boundary of the unit ball are realised as Euler classes of tight contact structures. Gabai proved that for any compact orientable irreducible 3-manifold with boundary a (possibly empty) union of tori, the vertices of the dual unit ball are realised as Euler classes of taut foliations \cite{gabai1997problems}. For a proof see \cite{gabai2020fullymarked}. By the perturbation results of Eliashberg--Thurston, Kazez--Roberts, and Bowden, the vertices are realised as Euler classes of tight contact structures as well. Note that in \cite{yazdi2020thurston}, it was not determined whether the other points $(\pm2,2k)$ of the unit dual ball with $|k|<g-1$ are realised as Euler classes of taut foliations on $M$ or not. Likewise, here we do not answer the question of whether the points $(\pm2,2k)$ with $|k|<g-1$ are realised as Euler classes of tight contact structures on $M$ or not.
\end{remark} 

\subsection*{Acknowledgements} We thank the referee for thoughtful feedback on the initial version of this paper, and in particular for raising the question of whether the Euler class one conjecture holds for universally tight contact structures.  The second author acknowledges the support by an UKRI Postdoctoral Research Fellowship.

\section{Contact geometric background}

In this section we summarize the basic material we will need from contact geometry.  Much of this material is standard, but we include the details for the sake of foliations experts who may not be as familiar with contact geometry.
Useful references include the book by Geiges \cite{geiges-book}, various lecture notes by Etnyre \cite{etnyre-intro,etnyre-knots} and Massot \cite{massot-notes}, and the book by Ozbagci--Stipsicz \cite{ozbagci-stipsicz} for material on Stein manifolds.

\subsection{Basic definitions}

A 1-form $\alpha$ on a 3-manifold $Y$ is a \emph{contact form} if $\alpha \wedge d\alpha$ is nowhere zero, i.e., $\alpha$ is maximally non-integrable. A plane field $\xi \subset TY$ is a \emph{contact structure} if locally it can be defined by a contact 1-form $\alpha$ as $\xi = \ker \alpha$. It is \emph{coorientable} if $\xi$ is the kernel of a globally defined 1-form $\alpha$, or equivalently if the normal line bundle $\xi^\perp \subset TY$ is trivial.  When $Y$ is oriented and $\xi=\ker \alpha$ is coorientable, we say that $\xi$ is \emph{positive} if the orientation of $\alpha \wedge d\alpha$ agrees with that of $Y$. From now on, all contact structures are assumed to be positive and coorientable.

Contact structures satisfy a dichotomy which was introduced by Eliashberg, following Bennequin \cite{bennequin1982entrelacements}, for which now we introduce some terminology.
\begin{definition}
A knot $\lambda$ in a contact 3-manifold $(Y,\xi)$ is \emph{Legendrian} if it is everywhere tangent to $\xi$, meaning that $T\lambda \subset \xi|_\lambda$.
\end{definition}

Given a knot $L \subset Y$, by a \emph{framing} of $L$ we mean a trivialisation of the normal bundle of $L$ up to homotopy. This is equivalent to a choice of a non-zero section of the normal bundle up to homotopy, since the normal bundle is an oriented plane bundle. A Legendrian knot determines a \emph{contact framing} given by the normal to $L$ in $\xi$.  If a Legendrian knot $L$ bounds a compact orientable surface $S$, then the \emph{surface framing} on $L$ is defined by the normal to $L$ in $TS$.

\begin{definition}
An embedded disc $D \subset (Y, \xi)$ is an \emph{overtwisted disc} if $\partial D$ is a Legendrian knot and furthermore, the surface framing and the contact framing on $\partial D$ agree.

A contact structure $\xi$ is \emph{overtwisted} if there is an overtwisted disc $D \subset (Y,\xi)$, and otherwise $(Y,\xi)$ is called \emph{tight}.    
\end{definition}
Eliashberg \cite{eliashberg1989classification} proved that overtwisted contact structures are not so interesting: if $Y$ is a closed, oriented 3-manifold and $\Delta \subset Y$ an embedded disk, and if we fix $\xi_\mathrm{ot}$ on $\Delta$ to be a standard overtwisted disk, then the inclusion
\[ \left\{\begin{array}{c} \text{contact structures} \\ \xi\text{ on }Y \end{array} \,\middle\vert\, \xi|_\Delta = \xi_{\mathrm{ot}}\right\} \hookrightarrow \left\{ 2\text{-plane fields on }Y \right\} \]
is a homotopy equivalence.  We thus focus our attention on tight contact structures.

\subsection{Fillings and cobordisms}

We can sometimes identify tight contact structures with the following useful criterion.

\begin{definition}
A contact structure $(Y,\xi)$ is \emph{weakly symplectically fillable} if there is a compact symplectic 4-manifold $(W,\omega)$ with boundary $M$ such that $\omega|_\xi > 0$.
\label{def: weakly fillable}
\end{definition}

\begin{thm}[Gromov \cite{gromov-holomorphic}, Eliashberg \cite{eliashberg-filling}]
Weakly symplectically fillable contact manifolds are tight.
\end{thm}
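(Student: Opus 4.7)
The plan is to prove the contrapositive: if $(Y,\xi)$ contains an overtwisted disc, then it admits no weak symplectic filling.  Assume for contradiction that $(W,\omega)$ is a weak filling and that $D \subset Y$ is an overtwisted disc.  The strategy, going back to Gromov, is to construct a moduli space of pseudoholomorphic discs in $W$ with boundary on a totally real perturbation of $D$, and to derive a contradiction by analysing its ends.

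First I would perturb $D$ slightly inside $Y$ so that $\partial D$ becomes positively transverse to $\xi$ while the characteristic foliation on the perturbed disc $D'$ has a unique elliptic singularity at its centre (this normal form for discs with Legendrian boundary equal to the contact framing is standard).  Next, choose an almost complex structure $J$ on $W$ that is tamed by $\omega$ and makes $\xi$ a $J$-invariant subbundle along $Y = \partial W$; with such a $J$, the surface $D'$ is totally real away from its singular point, so the Riemann--Hilbert boundary value problem for $J$-holomorphic discs $u \co (\mathbb{D},\partial\mathbb{D}) \to (W, D')$ is Fredholm.  Standard local analysis near the elliptic singularity (Bishop's construction) produces a $1$-parameter family of small embedded $J$-holomorphic discs emerging from the constant disc at the singular point.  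Let $\mathcal{M}$ denote the connected component of the moduli space of such discs containing this Bishop family.

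The main work is to show that $\mathcal{M}$ is a smooth $1$-manifold whose Gromov compactification $\overline{\mathcal{M}}$ has a single boundary point, namely the constant disc at the elliptic singularity; this yields the contradiction, since a compact $1$-manifold cannot have exactly one boundary point.  The ingredients are: (i) a uniform $\omega$-area bound on discs in $\mathcal{M}$, following from Stokes' theorem and the fact that all discs in a connected component represent the same relative homology class; (ii) Gromov compactness, producing a compactification by stable nodal discs; and (iii) ruling out each possible degeneration.  Sphere bubbling is excluded by the area bound together with the minimality of the Bishop discs' energy; disc bubbling off the boundary is excluded because the characteristic foliation of $D'$ has no other singular points and $\partial D'$ is transverse to $\xi$, which forces any such bubble to be trivial; and escape of the boundary into $\partial W = Y$ is prevented by the $J$-invariance of $\xi$ on $Y$ together with a maximum-principle argument, using that the weak filling condition $\omega|_\xi > 0$ gives the required $J$-convexity along $Y$ to push discs inward.

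The hardest step is the third one: excluding boundary escape into $Y$ and disc bubbling at $\partial D'$.  Both use the precise interplay between the overtwisted condition (equality of the surface and contact framings on $\partial D$) and the weak fillability hypothesis $\omega|_\xi > 0$, and both require the careful choice of $J$ near the boundary so that a strong enough maximum principle applies.  Once this analysis is completed, the compactness/boundary argument closes out the proof.
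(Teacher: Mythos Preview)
The paper does not supply a proof of this theorem at all: it is stated with attribution to Gromov and Eliashberg and then used as a black box.  So there is nothing in the paper to compare your argument against.

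That said, your sketch follows the classical Bishop-disc/filling-by-holomorphic-discs strategy that underlies the cited results, and the overall architecture is right.  A couple of points where the sketch is too optimistic: your exclusion of sphere bubbling (``by the area bound together with the minimality of the Bishop discs' energy'') is not how this is actually handled---for a general weak filling there may well be $J$-holomorphic spheres, and one instead argues either that the discs are confined to a collar neighbourhood of $Y$ where no spheres live, or one caps off/semipositivises first.  Likewise, the treatment of disc bubbling and of the boundary of $\mathcal{M}$ is more delicate than indicated: the other end of the $1$-manifold is typically shown to consist of discs whose boundaries limit onto the Legendrian $\partial D$ (or the transverse push-off), and one must argue carefully that this forces a geometric contradiction with the overtwisted framing condition, not merely that ``bubbles are trivial''.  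These are known and fixable, but they are exactly the places where the literature (Gromov, Eliashberg, and later expositions) does real work, so a complete proof would need to engage with them rather than wave at them.
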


We can also construct interesting symplectic cobordisms between contact manifolds; the following will suffice for our purposes.

\begin{definition}
A compact, almost complex 4-manifold $(W,J)$ is a \emph{Stein cobordism} if there is a function
\[ \phi: W \to [a,b] \]
such that
\begin{itemize}
\item $Y_0 = \phi^{-1}(a)$ and $Y_1 = \phi^{-1}(b)$ are regular level sets of $\phi$, with $\partial W = -Y_0 \sqcup Y_1$;
\item if $\omega = d(-d\phi\circ J)$, then $g_\phi(\cdot,\cdot) = \omega(\cdot, J\cdot)$ defines a Riemannian metric on $W$.
\end{itemize}
The latter condition says precisely that $\phi$ is \emph{strictly plurisubharmonic}.

In this case the 1-form $\alpha = -d\phi \circ J$ restricts to a contact form on each $Y_i$ ($i=0,1$), with contact structure $\xi_i = \ker(\alpha|_{Y_i}) = TY_i \cap JTY_i$, and $\omega=d\alpha$ is a symplectic form on $W$.
\end{definition}

\begin{remark}
A \emph{Stein filling} is a Stein cobordism $(W,J)$ from the empty manifold to $(Y,\xi)$.  Stein fillings are special cases of weak symplectic fillings: given any nonzero $v\in\xi_p$, the pair $(v,Jv)$ is an oriented basis of $\xi_p = T_pY \cap J T_pY$, and we have $\omega(v,Jv) = g_\phi(v,v) > 0$, so $\omega|_\xi > 0$.  
\end{remark}

\begin{ex} \label{ex:symplectization}
Let $(Y,\xi)$ be a contact 3-manifold with contact form $\alpha$ and associated \emph{Reeb vector field} $R_\alpha$, meaning that $\iota_{R_\alpha} d\alpha=0$ and $\alpha(R_\alpha)=1$.  Let $W = [0,1]\times Y$ with interval coordinate $t$.  Then we can give $W$ a $t$-invariant almost complex structure $J$, with $J\xi=\xi$ and $J(\partial_t)=R_\alpha$ (note that then $J(R_\alpha) = J^2(\partial_t) = -\partial_t$), and the function
\[ \phi(x,t) = e^t \]
is strictly plurisubharmonic; it satisfies $-d\phi\circ J = e^t\alpha$, which restricts to a contact form for $\xi$ on both $\{0\}\times Y$ and $\{1\}\times Y$.  Thus $J$ is a Stein structure on the symplectization $(W,d(e^t\alpha))$, realizing it as a Stein cobordism from $(Y,\xi)$ to itself.
\end{ex}

\begin{prop} \label{prop:stein-euler-class}
If $(W,J)$ is a Stein cobordism from $(Y_0,\xi_0)$ to $(Y_1,\xi_1)$, then each $\xi_i$ has Euler class
\[ e(\xi_i) = c_1(TW,J) |_{Y_i} \text{\ \ \ for }i=0,1. \]
\end{prop}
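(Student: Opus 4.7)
The proof proposal rests on decomposing $TW|_{Y_i}$ as a sum of complex line bundles, one of which is $\xi_i$ and the other of which is topologically trivial.

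My plan is to begin by observing that since $\xi_i = TY_i \cap JTY_i$ is by definition $J$-invariant, it is a complex rank-$1$ subbundle of the complex rank-$2$ bundle $(TW|_{Y_i}, J)$. The short exact sequence of complex vector bundles
\[
0 \longrightarrow \xi_i \longrightarrow TW|_{Y_i} \longrightarrow \left(TW|_{Y_i}\right)/\xi_i \longrightarrow 0
\]
splits (choose a $J$-invariant Hermitian metric on $TW$ and take the orthogonal complement), yielding an isomorphism $TW|_{Y_i} \cong \xi_i \oplus \nu_i$ of complex vector bundles, where $\nu_i := (TW|_{Y_i})/\xi_i$ is a complex line bundle. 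By additivity of the first Chern class under direct sum,
\[
c_1(TW,J)\bigr|_{Y_i} = c_1(TW|_{Y_i}) = c_1(\xi_i) + c_1(\nu_i).
\]
Since $c_1$ of a complex line bundle equals the Euler class of its underlying oriented real plane bundle, the result will follow once I show that $\nu_i$ is trivial as a complex line bundle, i.e.\ $c_1(\nu_i) = 0$.

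The key step is producing a nowhere-vanishing section of $\nu_i$. For this I use the structure provided by the strictly plurisubharmonic function $\phi$. The gradient vector field $\nabla_{g_\phi}\phi$ is nowhere vanishing along the regular level set $Y_i$ and is transverse to $Y_i$, so it gives a nowhere-vanishing section $\partial_\nu$ of the real normal bundle of $Y_i$ in $W$. Then $\partial_\nu$ and $J\partial_\nu$ together span a rank-$2$ real subbundle of $TW|_{Y_i}$ that is complementary to $\xi_i$: indeed, using $\alpha = -d\phi\circ J$, the vector $J\partial_\nu$ satisfies $\alpha(J\partial_\nu) = -d\phi(J^2\partial_\nu) = d\phi(\partial_\nu) > 0$, so it is transverse to $\xi_i$ inside $TY_i$ (it is a positive multiple of the Reeb field, as in Example~\ref{ex:symplectization}). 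Thus $\partial_\nu$ projects to a nowhere-vanishing section of the complex line bundle $\nu_i$, which forces $\nu_i$ to be trivial.

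Combining these observations, $c_1(\nu_i) = 0$ and therefore $c_1(TW,J)|_{Y_i} = c_1(\xi_i) = e(\xi_i)$, which is the claimed identity. I do not expect any serious obstacle: the only small subtlety is confirming that the normal direction $\partial_\nu$ really does descend to a nowhere-zero section of the quotient $\nu_i$, but this is immediate once we know $\partial_\nu \notin \xi_i$, since $\partial_\nu \notin TY_i$ already. The argument works uniformly for $i=0,1$ because the sign of $\partial_\nu$ (outward vs.\ inward) only affects the orientation of $\nu_i$, not its triviality as a complex line bundle.
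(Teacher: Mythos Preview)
Your argument is correct and is essentially the same as the paper's: both decompose $TW|_{Y_i}$ as $\xi_i$ plus a trivial complex line bundle by using $\nabla\phi$ and the computation $\alpha(J\nabla\phi)=d\phi(\nabla\phi)>0$. The only cosmetic difference is that you phrase the complement as the quotient $\nu_i=(TW|_{Y_i})/\xi_i$ and then trivialize it, whereas the paper writes the complement directly as the complex span $\C\cdot\nabla\phi$.
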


\begin{proof}
Let $\phi: W \to \R$ be a strictly plurisubharmonic function, so that $Y_i$ has contact form $\alpha = -d\phi\circ J$.  Then on $Y_i$ we note that $\nabla\phi$ is nonzero but not tangent to $Y_i$, since $Y_i$ is a regular level set of $\phi$, and likewise that $J\nabla\phi \not\in \xi_i$ since
\[ \alpha(J\nabla\phi) = (-d\phi \circ J)(J\nabla\phi) = d\phi(\nabla\phi) = |\nabla\phi|^2 > 0. \]
Thus we have an isomorphism of complex vector bundles
\[ TW|_{Y_i} \cong \xi_i \oplus \left(\C\cdot\nabla\phi\right) \cong \xi_i \oplus \underline{\C}, \]
and it follows as claimed that
\[ c_1(TW,J)|_{Y_i} = c_1(\xi_i,J) \oplus \underbrace{c_1(\underline{\C})}_{=0} = c_1(\xi_i,J) = e(\xi_i). \qedhere \]
\end{proof}

\subsection{Legendrian surgery and the Euler class}

Let $\lambda \subset (Y,\xi)$ be a Legendrian knot.  Then we can construct a new contact manifold $(Y',\xi')$ by \emph{Legendrian surgery} \cite{eliashberg-stein,weinstein-surgery}.  Smoothly, this amounts to a Dehn surgery on $\lambda$ of slope one less than its contact framing, meaning that we obtain it from the contact framing by adding one left-handed twist.  It moreover comes with a natural contact structure, which is tight on the solid torus that realizes the surgery and which agrees with the one on $Y$ outside a neighbourhood of $\lambda$.

\begin{lem} \label{lem:stein-cobordism}
Let $\lambda \subset (Y,\xi)$ be a Legendrian knot, and form $(Y',\xi')$ by Legendrian surgery on $\lambda$.  Then there is a Stein cobordism $(W,J)$ from $(Y,\xi)$ to $(Y',\xi')$, where $W$ is obtained smoothly by attaching a 2-handle to $[0,1]\times Y$
along $\{1\}\times\lambda$.  Moreover, if $(Y,\xi)$ is weakly symplectically fillable, then so is $(Y',\xi')$.
\end{lem}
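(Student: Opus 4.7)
The plan is to build the Stein cobordism $W$ by stacking two pieces: the trivial Stein cobordism from $(Y,\xi)$ to itself, followed by a Weinstein 2-handle attached along $\lambda$. First I would invoke Example~\ref{ex:symplectization} to view the symplectization $[0,1]\times Y$ as a Stein cobordism from $(Y,\xi)$ to itself. Then I would attach a 4-dimensional 2-handle along $\{1\}\times\lambda$ with framing one less than the contact framing, and appeal to the Eliashberg--Weinstein handle attachment theorem (\cite{eliashberg-stein, weinstein-surgery}; see also \cite{ozbagci-stipsicz}) to extend the strictly plurisubharmonic function $\phi$ and the compatible almost complex structure across the handle. Smoothly, the resulting 4-manifold $W$ is obtained from $[0,1]\times Y$ by attaching a 2-handle along $\{1\}\times\lambda$, as required.

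The upper boundary $Y_1 = \phi^{-1}(b)$ is then the result of Dehn surgery on $\lambda \subset Y$ with framing (contact framing)$-1$, which matches the topological description of Legendrian surgery. To identify the induced contact structure $\xi_1 = TY_1 \cap JTY_1$ with $\xi'$, I would note that both contact structures coincide with $\xi$ outside a neighborhood of $\lambda$, are tight on the glued-in solid torus, and agree with the standard tight structure on the boundary of the handle prescribed by the Weinstein model. These properties characterize the resulting contact structure up to isotopy.

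For the fillability assertion, let $(W_0,\omega_0)$ be a weak symplectic filling of $(Y,\xi)$, and form $W' = W_0 \cup_Y W$. I would produce a weak filling $\omega'$ of $(Y',\xi')$ by gluing $\omega_0$ to a rescaling of the symplectic form $\omega = d\alpha$ on $W$. The key point is that on $Y$, both $\omega_0|_\xi$ and $\omega|_\xi = d\alpha|_\xi$ are positive, so by a standard interpolation in a collar neighborhood of $Y$ one can deform the two forms to agree near the gluing locus while preserving positivity on $\xi$. This produces a well-defined symplectic form on $W'$ that restricts positively to $\xi'$ on the new top boundary $Y_1$.

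The main technical obstacle is the Stein handle attachment theorem, which is not elementary and depends on a careful model of the Weinstein 2-handle with framing one less than the contact framing; here I would simply invoke the cited literature rather than reproving it. The weak-filling gluing step, while delicate, reduces to a collar-neighborhood interpolation argument that is essentially formal once positivity of both forms on $\xi$ at the common boundary is established.
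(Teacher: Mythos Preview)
Your construction of the Stein cobordism is correct and matches the paper's proof essentially verbatim: invoke Example~\ref{ex:symplectization} for the cylinder, then cite Eliashberg's handle attachment to extend the Stein structure across the $2$-handle.

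The fillability argument, however, has a genuine gap. You propose to glue the weak filling $(W_0,\omega_0)$ to the Stein cobordism $(W,d\alpha)$ along $Y$ by interpolating the two symplectic forms in a collar, on the grounds that both restrict positively to $\xi$. But positivity on $\xi$ is not sufficient: the interpolated form must remain \emph{closed}, and here there is a cohomological obstruction. On a cylinder $Y\times[-\epsilon,\epsilon]$ any closed $2$-form restricts to cohomologous forms on the two ends, yet $d\alpha|_Y$ is exact while $\omega_0|_Y$ need not be --- weak fillability imposes no such constraint, and indeed exactness of $\omega_0|_{\partial W_0}$ is essentially what distinguishes strong fillings from weak ones. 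So in general $[\omega_0|_Y]\neq 0=[c\,d\alpha|_Y]$ in $H^2(Y;\R)$, no rescaling fixes this, and the ``standard interpolation'' you describe does not exist. The step is not formal.

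The paper instead cites Etnyre--Honda \cite[Theorem~2.5]{eh-no-fillings}, whose argument is genuinely different from yours: rather than gluing the entire Stein cobordism onto $W_0$, they deform $\omega_0$ only in a tubular neighbourhood of $\lambda$ in $\partial W_0$ so that it resembles a \emph{strong} filling there (the obstruction above vanishes because this neighbourhood is a solid torus with $H^2=0$), and then attach the Weinstein $2$-handle directly to the modified $(W_0,\omega_0)$. Since the handle attachment interacts with the symplectic form only near $\lambda$, this local modification suffices, and the result is a weak filling of $(Y',\xi')$.
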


\begin{proof}
We put a Stein structure on $[0,1]\times Y$ as in Example~\ref{ex:symplectization}.  Then Eliashberg \cite{eliashberg-stein} shows how to extend the Stein structure across a 4-dimensional 2-handle which has been attached to $\{1\}\times\lambda$ with framing one less than the contact framing.  This gives the desired Stein cobordism.

If $(Y,\xi)$ is weakly fillable then Etnyre and Honda \cite[Theorem~2.5]{eh-no-fillings} proved that $(Y',\xi')$ will be as well.  The key observation is that extending a symplectic form across the 2-handle only uses the symplectic structure near $\lambda$, so they deform it slightly to resemble a strong symplectic filling near $\lambda$ and then proceed with Eliashberg's construction.
\end{proof}

We would like to use the Stein cobordism $(W,J)$ of Lemma~\ref{lem:stein-cobordism} to compute the Euler class $e(\xi')$, but we must first figure out how to compute $c_1(TW,J)$.  For Stein manifolds $(W,J)$ built by attaching 1- and 2-handles to the standard 4-ball, Gompf \cite[Proposition~2.3]{gompf1998handlebody} explicitly determined the evaluation of $c_1(TW,J)$ on a basis of $H_2(W)$.

\begin{definition}
Let $\lambda \subset (Y,\xi)$ be an oriented, nullhomologous Legendrian knot, with Seifert surface $\Sigma \subset Y$.  We can then fix a trivialization
\[ \tau: \xi|_\Sigma \xrightarrow{\cong} \Sigma \times \R^2. \]
The oriented unit tangent vectors to $\lambda$ define a section of $\xi|_\lambda$, which pulls back to a nonzero section
\[ s: \partial \Sigma \to \xi|_{\partial\Sigma} \xrightarrow{\tau} \partial \Sigma \times \R^2, \]
and the \emph{rotation number} $\rot(\lambda,[\Sigma])$ is defined as the winding number of $s$ in $\R^2\setminus \{0\}$.  It depends only on the relative homology class of $\Sigma$.
\end{definition}

An oriented Legendrian knot $\lambda$ admits both a positive stabilization $S_+(\lambda)$ and a negative stabilization $S_-(\lambda)$, as shown in Figure~\ref{fig:stabilization}.
\begin{figure}
\begin{tikzpicture}
\draw[->] (-1.5,2) -- node[midway,above] {$\lambda$} ++(3,0);
\draw[->] (-4,0) to[out=0,in=180] ++(2,0.25) -- ++(-1,-0.5) to[out=0,in=180] ++(2,0.25);
\draw[->] (1,0) to[out=0,in=180] ++(2,-0.25) -- ++(-1,0.5) to[out=0,in=180] ++(2,-0.25);
\node[below] at (-2.5,-0.25) {$S_+(\lambda)$};
\node[below] at (2.5,-0.25) {$S_-(\lambda)$};
\draw[densely dashed,thin,->] (-0.75,1.75) -- (-2.25,0.5);
\draw[densely dashed,thin,->] (0.75,1.75) -- (2.25,0.5);
\end{tikzpicture}
\caption{Stabilization of an oriented Legendrian knot $\lambda$.  Here we work locally, in the standard contact $(\R^3,\xi=\ker(dz-ydx))$, with each knot shown in the front ($xz$-) projection.}
\label{fig:stabilization}
\end{figure}
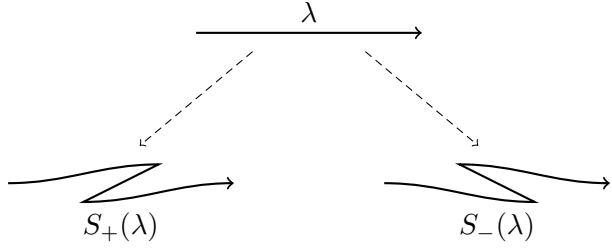
This operation adds a left-handed twist to the contact framing while changing the rotation numbers by
\[ \rot(S_\pm(\lambda),[\Sigma]) = \rot(\lambda,[\Sigma]) \pm 1. \]

The following result of Gompf, which is essentially \cite[Proposition~2.3]{gompf1998handlebody}, says that rotation numbers determine the first Chern classes of the sorts of Stein cobordisms described by Lemma~\ref{lem:stein-cobordism}.

\begin{lem} \label{lem:compute-c1}
Let $\lambda \subset (Y,\xi)$ be a nullhomologous Legendrian knot, with Seifert surface $\Sigma$.  Let $(Y',\xi')$ be the result of Legendrian surgery on $\lambda$, with
\[ (W,J): (Y,\xi) \to (Y',\xi') \]
the Stein 2-handle cobordism described by Lemma~\ref{lem:stein-cobordism}.  Form a closed smooth surface $\hat\Sigma \subset W$ by gluing a core of the 2-handle to $\Sigma$, where we have embedded the latter in $Y\times \{1\}$ and then pushed its interior into $Y\times (0,1)$.  Then
\[ \langle c_1(TW,J), [\hat\Sigma] \rangle = \rot(\lambda,[\Sigma]). \]
\end{lem}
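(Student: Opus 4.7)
The plan is to compute $\langle c_1(TW, J), [\hat\Sigma]\rangle$ by obstruction theory applied to the complex determinant line bundle $\det_\C(TW, J)$, whose first Chern class coincides with $c_1(TW,J)$. The key observation is that $\hat\Sigma$ decomposes as $\Sigma$, sitting inside the symplectization part of $W$, glued along $\lambda = \partial\Sigma$ to the core disk $D$ of the 2-handle.

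First I would use Example~\ref{ex:symplectization} to identify, on $[0,1]\times Y \subset W$, the canonical splitting of complex vector bundles
\[ (TW,J)|_{[0,1]\times Y} \cong \xi \oplus \langle \partial_t, R_\alpha\rangle \cong \xi \oplus \underline{\C}, \]
so that $\det_\C TW|_{[0,1]\times Y} \cong \xi$ as complex line bundles. Pushing $\Sigma$ into the interior of the cylinder, the trivialization $\tau\colon \xi|_\Sigma \xrightarrow{\cong} \Sigma\times\R^2$ then induces a non-vanishing section $\sigma_\Sigma$ of $\det_\C TW|_\Sigma$. On the other side, since $D$ is contractible, $\det_\C TW|_D$ admits a non-vanishing section $\sigma_D$. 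By standard obstruction theory for complex line bundles on the closed surface $\hat\Sigma = \Sigma \cup_\lambda D$, the pairing $\langle c_1(TW,J), [\hat\Sigma]\rangle$ equals the winding number of $\sigma_\Sigma$ with respect to $\sigma_D$ along $\lambda$, once orientation conventions are tracked.

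It then remains to identify this winding number with $\rot(\lambda, [\Sigma])$. For this I would invoke the standard local model for Eliashberg's Weinstein 2-handle: in this model the positive unit tangent vector field along the Legendrian $\lambda$, which is a section of $\xi|_\lambda$, extends across the core $D$ to a non-vanishing (essentially radial) vector field, and pairing it with a natural transverse direction of the handle produces a non-vanishing section of $\det_\C TW|_D$ whose restriction to $\lambda$ corresponds, under the splitting $\xi \oplus \underline{\C}$, to the Legendrian tangent section of $\xi|_\lambda$. By the definition of the rotation number, this tangent section winds $\rot(\lambda,[\Sigma])$ times relative to $\tau$, yielding the claimed equality. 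The main technical obstacle lies precisely here: verifying that the extension of the Legendrian tangent framing across the Weinstein handle is compatible with $J$ and differs from the symplectization trivialization by exactly the rotation-number winding. This is essentially the single-2-handle case of Gompf's \cite{gompf1998handlebody}, Proposition 2.3, and once it is in place everything else in the argument is a routine application of obstruction theory.
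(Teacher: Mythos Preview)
Your proposal is correct and is essentially the same approach as the paper's: the paper does not give an independent proof but simply observes that Gompf's argument for \cite[Proposition~2.3]{gompf1998handlebody} goes through verbatim once one has a trivialization of $\xi$ over $\Sigma$, and what you have written is precisely a sketch of that argument specialized to a single 2-handle. Your identification of the ``main technical obstacle'' is apt---this is exactly the content of Gompf's local computation, and the paper is content to cite it rather than reproduce it.
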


The proof of \cite[Proposition~2.3]{gompf1998handlebody} is carried out specifically for Stein manifolds built by attaching 1- and 2-handles to the standard 4-ball, but it applies equally well in this setting because it only requires a trivialization of $\xi$ on $\Sigma$.

\section{The candidate manifolds}

\begin{notation}
	For a topological space $X$ and $A \subset X$, denote the interior of $A$ by $A^\circ$.
\end{notation}

We recall the construction of the manifolds from \cite{yazdi2020thurston}. Let $S$ be a closed orientable surface of genus $g$, where either $g=3$ or $g \geq 6$, and $\gamma$ be an oriented non-separating simple closed curve in $S$. Let $A$ be a tubular neighbourhood of $\gamma$ in $S$ with $\partial A = \{ \gamma_+, \gamma_- \}$. The orientation on $\gamma$ induces an orientation on $\gamma_+$ and $\gamma_-$. Pick points $p_1$, $p_2$, $p_3$ on $\gamma_+$ such that the cyclic ordering of $p_i$ agrees with the orientation of $\gamma_+$. Similarly pick points $q_1$, $q_2$, $q_3$ on $\gamma_-$. Pick disjoint oriented simple arcs $\ell_i \subset A$ such that $\ell_i$ connects $q_i$ to $p_i$, where the index $i$ varies in $\mathbb{Z}/3\mathbb{Z}$ throughout. Similarly pick disjoint oriented simple arcs $m_i \subset A$ connecting $q_{i+1}$ to $p_i$ such that $m_i$ is obtained by perturbing the union of the two oriented arcs $q_{i+1}q_i$ and $l_i$. See Figure \ref{cylinder}.

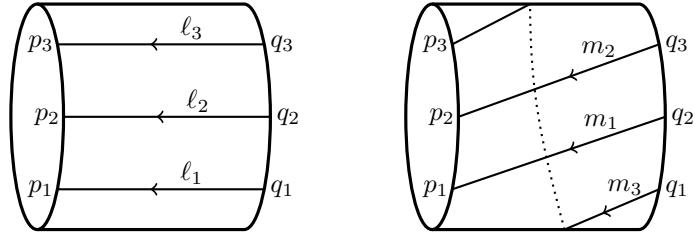
\begin{figure}
\begin{tikzpicture}
\begin{scope}[every path/.style={very thick}]
\foreach \x in {0,5.25} {
  \draw (\x,0) ellipse (0.35 and 1.5);
  \draw (\x,1.5) -- ++(2.75,0) arc (90:-90:0.35 and 1.5) -- ++(-2.75,0);
}
\end{scope}
\begin{scope}[decoration={markings,mark=at position 0.55 with {\arrow{>}}}] 
\foreach \i/\t in {3/40,2/0,1/-40} {
  \path (0,1.5) arc (90:\t:0.35 and 1.5) coordinate (p\i) -- node[above,pos=0.65,inner sep=1.5pt] {\small$\ell_{\i}$} ++(2.75,0) coordinate (q\i);
  \draw[postaction={decorate}] (q\i) node[right,inner sep=2pt] {\small$q_{\i}$} -- (p\i) node[left,inner sep=1pt] {\small$p_{\i}$};
}
\end{scope}
\begin{scope}
\foreach \i/\t in {3/40,2/0,1/-40} {
  \path (5.25,1.5) arc (90:\t:0.35 and 1.5) coordinate (p\i) -- ++(2.75,0) coordinate (q\i);
  \path (q\i) node[right,inner sep=2pt] {\small$q_{\i}$} -- (p\i) node[left,inner sep=1pt] {\small$p_{\i}$};
}
\coordinate (p0) at (5.25,-2.25);
\coordinate (q4) at (8,2.25);
\draw[decoration={markings,mark=at position 0.45 with {\arrow{>}}},postaction={decorate}] (q3) -- node[above,pos=0.3,inner sep=3pt] {\small$m_2$} (p2);
\draw[decoration={markings,mark=at position 0.45 with {\arrow{>}}},postaction={decorate}] (q2) -- node[above,pos=0.3,inner sep=3pt] {\small$m_1$} (p1);
\clip (5.25,-1.5) rectangle (8.35,1.5);
\draw[decoration={markings,mark=at position 0.25 with {\arrow{>}}},postaction={decorate}] (q1) -- node[above,pos=0.15,inner sep=3pt] {\small$m_3$} (p0);
\path[name path=bottomarc] (q1) -- (p0);
\path[name path=bottomedge] (5.25,-1.5) -- (8,-1.5);
\path [name intersections={of=bottomarc and bottomedge,by=r1}];
\draw[name path=toparc] (q4) -- (p3);
\path[name path=topedge] (5.25,1.5) -- (8,1.5);
\path[name intersections={of=toparc and topedge,by=r2}];
\draw[dotted] (r1) to[out=105,in=270] (r2);
\end{scope}
\end{tikzpicture}
\caption{Left: arcs $\ell_i \subset A$. Right: arcs $m_i \subset A$.}
\label{cylinder}
\end{figure}

Let $\delta_1$, $\delta_2$, $\delta_3$ be disjoint oriented simple arcs in $S-A^\circ$ with $\delta_i$ joining $p_i$ to $q_{i-1}$. Define the curve $\alpha$ as the union of the six oriented arcs $\ell_i$ and $\delta_i$. Define $\beta$ as the union of the six oriented arcs $m_i$ and $\delta_i$. Both $\alpha$ and $\beta$ are connected simple closed curves. 

Let $M_f$ be the mapping torus of $f$. Hence, $M_f = S \times [0,1] / \sim$ where $(x, 1) \sim (f(x),0)$ for every $x \in S$. It is shown in \cite[Lemma 4.1]{yazdi2020thurston} that for a suitable choice of arcs $\delta_i$, there is a pseudo-Anosov map $f \colon S \rightarrow S$ such that 
\begin{enumerate}
	\item $H_2(M_f)$ has rank 1. 
	\item $f(\alpha)=\beta$.
\end{enumerate}
Let $N$ be a solid torus with meridional disc $D$. Let $U = A \times [\frac{1}{4}, \frac{3}{4}]$ be a regular neighbourhood of $\gamma \times \{\frac{1}{2}\}$ in $M_f$. The manifold $M$ is defined by Dehn surgery on $U \subset M_f$, where the glued-in solid torus is identified with $N$, and $\partial D$ is attached to the union $C$ of $\ell_i \times \{ \frac{3}{4} \}$, $m_i \times \{ \frac{1}{4} \}$, $p_i \times [\frac{1}{4}, \frac{3}{4}]$, and $q_i \times [\frac{1}{4}, \frac{3}{4}]$ for $1 \leq i \leq 3$. See Figure~\ref{boundaryofD}.

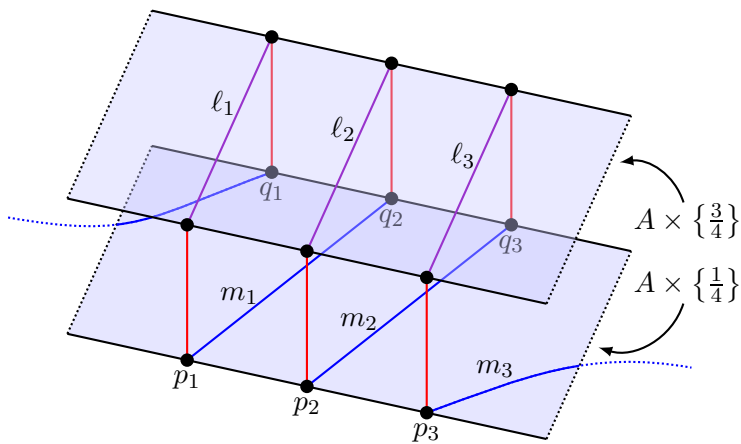
\begin{figure}
\definecolor{ellpurple}{rgb}{0.6,0.2,0.8}
\begin{tikzpicture}
\begin{scope}[xslant=0.45,yslant=-0.2]
\path[fill=blue!10] (0,0) rectangle (7,2.5);
\draw (0,0) coordinate (sw) -- ++(7,0) coordinate (se) (0,2.5) coordinate (nw) -- ++(7,0) coordinate (ne);
\draw[densely dotted] (0,0) -- ++(0,2.5) (7,0) -- node[midway,outer sep=0] (A1arrow) {} ++(0,2.5);
\foreach \i in {1,2,3} {
  \coordinate (p\i) at (1.75*\i,0);
  \coordinate (q\i) at (1.75*\i,2.5);
}
\coordinate (endp) at (8.5,1.25);
\coordinate (endq) at (-1.5,1.25);
\end{scope}
\draw[blue] (p1) -- node[pos=0.4,left] {\textcolor{black}{$m_1$}} (q2);
\draw[blue] (p2) -- node[pos=0.4,left] {\textcolor{black}{$m_2$}} (q3);
\draw[blue,densely dotted] (p3) to[out=20,in=170] (endp);
\draw[blue,densely dotted] (q1) to[out=200,in=-10,looseness=1.25] (endq);
\begin{scope}
\clip (sw) -- (se) -- (ne) -- (nw) -- cycle;
\draw[blue] (p3) to[out=20,in=170] node[pos=0.25,above] {\textcolor{black}{$m_3$}} (endp);
\draw[blue] (q1) to[out=200,in=-10,looseness=1.25] (endq);
\end{scope}
\foreach \i in {1,2,3} {
  \foreach \j in {p,q} {
    \draw[red] (\j\i) -- ++(0,1.8);
    \draw[fill=black] (\j\i) node[below] {$\j_{\i}$} circle (0.075);
  }
}
\begin{scope}[yshift=1.8cm]
\begin{scope}[xslant=0.45,yslant=-0.2]
\path[fill=blue!20,fill opacity=0.4] (0,0) rectangle (7,2.5);
\draw (0,0) -- ++(7,0) (0,2.5) -- ++(7,0);
\draw[densely dotted] (0,0) -- ++(0,2.5) (7,0) -- node[pos=0.75,outer sep=0] (A3arrow) {} ++(0,2.5);
\foreach \i in {1,2,3} {
  \coordinate (p\i) at (1.75*\i,0);
  \coordinate (q\i) at (1.75*\i,2.5);
}
\end{scope}
\end{scope}
\foreach \i in {1,2,3} {
  \draw[ellpurple] (p\i) -- node[pos=0.65,left,inner sep=2pt] {$\textcolor{black}{\ell_{\i}}$} (q\i);
  \foreach \j in {p,q} { \draw[fill=black] (\j\i) circle (0.075); }
}
\node[inner sep=0] (A1head) at (8.25,0.65) {$A\times\left\{\tfrac{1}{4}\right\}$};
\draw[-latex] (A1head) to[bend left=45] (A1arrow);
\node[inner sep=0] (A3head) at (8.25,1.5) {$A\times\left\{\tfrac{3}{4}\right\}$};
\draw[-latex] (A3head) to[bend right=40] (A3arrow);
\end{tikzpicture}
\caption{The curve $\partial D$ is identified with the union of $\ell_i \times \{ \frac{3}{4} \}$ (purple), $m_i \times \{ \frac{1}{4} \}$ (blue), $p_i \times [\frac{1}{4}, \frac{3}{4}]$ (red), and $q_i \times [\frac{1}{4}, \frac{3}{4}]$ (red) on the boundary of $M_f - U^\circ$. For readability, we label $\ell_i \times \{ \frac{3}{4} \}$ and $m_i \times \{\frac{1}{4} \}$ with $\ell_i$ and $m_i$ respectively.}
\label{boundaryofD}
\end{figure}

Define the surface $F_0 \subset M$ as the union of $D$ and the three bands $\delta_i \times [\frac{1}{4}, \frac{3}{4}]$. Then $F_0$ is a surface of genus 1 with two boundary components $\alpha \times \{\frac{3}{4}\}$ and $- \beta \times \{\frac{1}{4}\}$. Define the surface $F \subset M$ as the union of $F_0$, the two annuli $\alpha \times [\frac{3}{4}, 1]$ and $\beta \times [0, \frac{1}{4}]$, and identifying $\alpha \times \{1\}$ with $\beta \times \{0\}$ by the map $f$. Then $F$ is a closed orientable surface of genus 2. It is shown in \cite[Lemma 5.1]{yazdi2020thurston} that $H_2(M; \mathbb{Z})$ is generated by $S$ and $F$, and the unit ball and the unit dual ball of the Thurston norm are as in Figure \ref{unitballs}.

\section{Construction of tight contact structures with the given Euler classes}

Our goal is to construct a tight contact structure $\xi$ on $-M$ with Euler class
\[ a=(0,2-2g) \in 2H^2(-M;\mathbb{Z}). \]
Concretely, this means that we expect it to satisfy the relations
\begin{align*}
\langle e(\xi), [S] \rangle &= 2-2g, &
\langle e(\xi), [F] \rangle &= 0.
\end{align*}
We will rely heavily on the theory of \emph{convex surfaces} in contact 3-manifolds, as developed by Giroux \cite{giroux-convexity}; see also the lecture notes by Massot \cite{massot-notes}.

To begin, we choose an essential simple closed curve $c \in S$ that is both disjoint from $\gamma$ and nonseparating in $S \setminus \gamma$, and we fix a collar neighbourhood
\[ V \cong c \times [-1,1] \]
of $c$ in $S$ which is also disjoint from $\gamma$.  The following is then a straightforward consequence of results of Honda, Kazez, and Mati\'{c} \cite{hkm-hyperbolic}.

\begin{lem} \label{lem:contact-torus}
Up to isotopy, there is a unique tight contact structure $\xi_{f^{-1}}$ on the mapping torus
\[ M_{f^{-1}} = \frac{S \times [0,1]}{(x,1) \sim (f^{-1}(x),0)} \]
of $f^{-1}$, such that the middle fiber
\[ S_{1/2} = S\times\{\tfrac{1}{2}\} \]
is a convex surface with dividing set $\Gamma_{S_{1/2}} = \partial V \times \{\frac{1}{2}\}$ and negative region $(S_{1/2})_- = V\times\{\frac{1}{2}\}$.  If $S$ is any fiber of $M_{f^{-1}}$ then
\[ \langle e(\xi_{f^{-1}}), [S] \rangle = 2 - 2g, \]
and $\xi_{f^{-1}}$ is universally tight and weakly symplectically fillable.
\end{lem}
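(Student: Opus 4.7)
The plan is to derive all four claims of the lemma—existence/uniqueness, the Euler class computation, universal tightness, and weak fillability—directly from the classification of tight contact structures on pseudo-Anosov mapping tori carried out by Honda, Kazez, and Mati\'c~\cite{hkm-hyperbolic}. Since $f$ is pseudo-Anosov so is $f^{-1}$, and $M_{f^{-1}}$ is hyperbolic, placing us in exactly their setting.

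For existence and uniqueness I would appeal to the HKM classification: given a pseudo-Anosov $\phi$ on a closed surface $\Sigma$ of genus at least two, a multicurve $\Gamma \subset \Sigma$ of pairwise disjoint essential simple closed curves, and a choice of signs on $\Sigma \setminus \Gamma$, there is (under a natural admissibility condition) a unique tight contact structure on $M_\phi$ up to isotopy with a convex fiber realizing the data $(\Gamma,\Sigma_\pm)$. Applied with $\phi = f^{-1}$, $\Gamma = \partial V$, and $\Sigma_- = V$, this produces $\xi_{f^{-1}}$ and gives uniqueness. The admissibility check is easy here: $\Gamma$ consists of two parallel copies of the essential curve $c$, so it has no null-homotopic components, and the sign partition is the evident one.

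For the Euler class I would use Giroux's convex-surface formula
\[
\langle e(\xi_{f^{-1}}), [S_{1/2}] \rangle = \chi((S_{1/2})_+) - \chi((S_{1/2})_-).
\]
Here $(S_{1/2})_- = V$ is a closed annulus, so $\chi = 0$, and $(S_{1/2})_+ = S \setminus V^\circ$ has Euler characteristic equal to $\chi(S) = 2-2g$ because removing an open annulus is $\chi$-neutral. Hence the pairing equals $2-2g$. The value is independent of the choice of fiber, since any two fibers of $M_{f^{-1}}$ are homologous (they cobound a copy of $S \times I$).

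Universal tightness and weak symplectic fillability are likewise built into HKM's conclusion for contact structures constructed from this kind of ``minimal'' dividing-set data on a convex fiber; both properties come out of their explicit construction, with the filling produced by a symplectic handle-attachment argument. The main obstacle is simply to match our precise setup to the hypotheses of their classification theorem—checking admissibility of the dividing set and the sign partition—but this is direct given that $\Gamma = \partial V$ is a pair of parallel essential curves on the pseudo-Anosov fiber $S_{1/2}$.
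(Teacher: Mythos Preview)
Your approach is the same as the paper's---both derive everything from Honda--Kazez--Mati\'c \cite{hkm-hyperbolic}---and your Euler class computation via $\chi(S_+)-\chi(S_-)$ matches exactly. But your statement of what HKM prove is not quite right, and the order of your deductions should be reversed.

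There is no theorem of the form ``given admissible dividing-set data $(\Gamma,\Sigma_\pm)$ on a convex fiber, there is a unique tight contact structure on $M_\phi$ realizing it.'' For a generic dividing set on a convex fiber one expects many tight contact structures, distinguished by bypass attachments in the suspension direction; the ``admissibility check'' you describe (no null-homotopic dividing curves) is necessary for tightness but nowhere near sufficient for uniqueness. What HKM actually establish (their Theorem~1.2) is uniqueness, universal tightness, and weak fillability for \emph{extremal} tight contact structures on pseudo-Anosov mapping tori, meaning those with $\langle e(\xi),[S]\rangle = \pm(2g-2)$. Your data $\Gamma=\partial V$ with $\Sigma_-=V$ an annulus does force extremality, but you must compute the Euler class \emph{first} and then invoke HKM for uniqueness and fillability, not the other way around.

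The paper follows exactly this logic: it builds a specific $\xi_I$ on $S\times[0,1]$ via HKM's Theorem~1.1 with prescribed convex boundary and relative Euler class $\mathrm{PD}(\tilde e(\xi_I))=c-f(c)$---after checking that $f(c)$ is not homologous to $c$, which follows from $b_1(M_f)=1$ and is needed for that theorem to apply cleanly---then glues the ends by $f^{-1}$, computes $\langle e(\xi_{f^{-1}}),[S_{1/2}]\rangle = 2-2g$ to verify extremality, and only then appeals to Theorem~1.2 for uniqueness and weak fillability. You should restructure your argument along the same lines.
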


\begin{proof}
We note that $f(c)$ is not isotopic or even homologous to $c$, since $b_1(M_f) = 1$.  With this in mind, Honda, Kazez, and Mati\'c \cite[Theorem~1.1]{hkm-hyperbolic} proved that there is a unique universally tight contact structure $\xi_I$ on $S \times [0,1]$ which satisfies the following:
\begin{itemize}
\item the surfaces $S_1 = S \times \{1\}$ and $S_0 = S \times \{0\}$ are convex;
\item the dividing set on $S_1$ is
\[ \Gamma_{S_1} = f(\partial V) \times \{1\} = f(c\times\{\pm1\}) \times \{1\}, \]
oriented so that the negative region $(S_1)_-$ is the annulus $f(V)\times\{1\}$;
\item the dividing set on $S_0$ is
\[ \Gamma_{S_0} = \partial V \times \{0\} = (c\times\{\pm1\}) \times \{0\}, \]
oriented so that the negative region $(S_0)_-$ is the annulus $V\times\{0\}$;
\item the relative Euler class \cite[\S3]{hkm-hyperbolic} of $\xi_I$ satisfies $\mathrm{PD}(\tilde{e}(\xi_I)) = c - f(c)$.
\end{itemize}
Since $S_0$ is convex, it has a collar neighborhood $S \times [0,\epsilon)$ on which the contact structure $\xi_I$ is invariant in the $[0,\epsilon)$ direction; we will reparametrise the interval factor $[0,1]$ smoothly so that we can take $\epsilon = \frac{2}{3}$, and then each surface $S_t = S \times \{t\}$ with $0 \leq t < \frac{2}{3}$ is convex with negative region $(S_t)_- = V \times \{t\}$.

We can now glue $S_1$ to $S_0$ via $f^{-1}$, which identifies $\beta \times \{1\} \subset S_1$ with $\alpha \times \{0\} \subset S_0$, and $\Gamma_{S_1} \subset S_1$ with $\Gamma_{S_0} \subset S_0$.  Then $\xi_I$ descends to a contact structure $\xi_{f^{-1}}$ on $M_{f^{-1}}$, which is universally tight, as argued in the proof of \cite[Theorem~1.2]{hkm-hyperbolic}.  The evaluation of its Euler class $e(\xi_{f^{-1}})$ on the class of a fiber does not depend on the choice of fiber, so by convex surface theory we compute that
\[ \langle e(\xi_{f^{-1}}), [S_{1/2}] \rangle = \underbrace{\chi((S_{1/2})_+)}_{=2-2g} - \underbrace{\chi((S_{1/2})_-)}_{=0} = 2-2g. \]
Thus $\xi_{f^{-1}}$ is ``extremal'' in the sense of \cite{hkm-hyperbolic}, and then \cite[Theorem~1.2]{hkm-hyperbolic} asserts that it is unique up to isotopy and also weakly fillable.
\end{proof}

\begin{lem} \label{lem:surgery-slope}
There is an orientation-preserving diffeomorphism between $-M$ and the manifold built by $-3$-framed Dehn surgery on the curve
\[ \gamma \times \{\tfrac{1}{2}\} \subset M_{f^{-1}}, \]
with respect to the framing given by a push-off inside the surface $S_{1/2} = S\times\{\frac{1}{2}\}$.
\end{lem}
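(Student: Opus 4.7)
The plan is to realize $M$ explicitly as a framed Dehn surgery on $\gamma \times \{\tfrac{1}{2}\} \subset M_f$, and then transport this description to $M_{f^{-1}}$ via an orientation-reversing diffeomorphism.

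First I would exhibit the map
\[ \phi \colon M_f \to M_{f^{-1}}, \qquad (x,t) \mapsto (x,1-t), \]
verify that it descends to a well-defined, orientation-reversing diffeomorphism, and check that it preserves $\gamma \times \{\tfrac{1}{2}\}$ setwise together with its orientation and its surface framing from $S_{1/2}$. Since slopes of framed Dehn surgeries negate under orientation reversal of the ambient manifold, this reduces the lemma to showing that $M$ itself is obtained from $M_f$ by $(+3)$-framed surgery on $\gamma \times \{\tfrac{1}{2}\}$ with respect to the surface framing.

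The remaining claim is a homological computation on the surgery torus $\partial U$ with basis $\{[\mu],[\lambda_S]\}$ of $H_1(\partial U;\mathbb{Z})$, where $\mu$ bounds a meridional disk of $U$ and $\lambda_S = \gamma_+ \times \{\tfrac{1}{2}\}$ realizes the surface framing oriented by $\gamma$. Reading off $C = \partial D$ from Figure~\ref{boundaryofD} and traversing it once around, I would verify that (i) its total $\gamma$-direction winding is $+1$---the arcs $\ell_i$ and the vertical segments contribute nothing, while the three arcs $m_i$ each contribute $\pm\tfrac{1}{3}$ summing to $+1$---and (ii) its signed intersection with $\lambda_S$ is $+3$, arising from the three transverse crossings of $\lambda_S$ by the vertical arcs $p_i \times [\tfrac{1}{4},\tfrac{3}{4}]$, which share a common sign once $\partial U$ is oriented as the boundary of the knot complement so that $[\mu]\cdot[\lambda_S]=+1$. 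Together these give $[C] = 3[\mu] + [\lambda_S]$, hence surgery slope $+3$, and the lemma follows. The main technical issue is sign bookkeeping: one must coordinate the orientations of $\partial U$, $\mu$, and $\lambda_S$ with a chosen Dehn-surgery convention, and correctly account for the sign change of the slope under $\phi$, so as to end with $-3$ rather than $+3$ in $M_{f^{-1}}$.
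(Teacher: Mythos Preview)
Your proposal is correct and follows essentially the same route as the paper: both use the involution $(x,t)\mapsto(x,1-t)$ to identify $-M_f$ with $M_{f^{-1}}$ and then track how the surgery slope changes sign under this map. The only difference is that the paper simply quotes the $+3$ slope in $M_f$ from \cite{yazdi2020thurston} rather than re-reading it off $\partial U$ as you do, and it phrases the diffeomorphism as orientation-preserving $-M_f \to M_{f^{-1}}$ (computing $\varphi_*[\mu_f]=-[\mu_{f^{-1}}]$, $\varphi_*[\lambda_{f,S}]=[\lambda_{f^{-1},S}]$ explicitly) rather than invoking the general rule that slopes negate under orientation reversal.
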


\begin{proof}
We briefly recall the construction of $M$ from the mapping torus $M_f$.  In $M_f$, we identified a regular neighbourhood $U$ of $\gamma\times\{\frac{1}{2}\}$, and there is a natural basis $(\mu_f,\lambda_{f,S})$ of $H_1(\partial U)\cong \mathbb{Z}^2$ in which $\mu_f$ is a meridian (i.e., it is the oriented boundary of a meridional disk inside $U$) and $\lambda_{f,S}$ is a push-off of $\gamma\times\{\frac{1}{2}\}$ inside $S \times \{\frac{1}{2}\}$.  In these coordinates, we formed $M$ from a $+3$-surgery on $\gamma\times\{\frac{1}{2}\}$, meaning that we performed Dehn filling along a curve $\alpha$ in the homology class $3[\mu_f]+[\lambda_{f,S}] \in H_1(\partial (M_f\setminus U^\circ))$.

Similarly, we can produce $-M$ by Dehn surgery on the curve $\gamma\times\{\frac{1}{2}\}$ in $M_{f^{-1}}$.  We start with an orientation-preserving diffeomorphism
\[ \varphi: -M_f \xrightarrow{\sim} M_{f^{-1}}, \]
built by sending $(x,t) \in S \times [0,1]$ to $(x,1-t) \in S \times [0,1]$ and then gluing $S\times \{1\}$ to $S\times \{0\}$ on either side to form the mapping tori.  Then $-M$ is the result of Dehn filling $M_{f^{-1}} \setminus \varphi(U^\circ)$ along the curve $\varphi(\alpha)$.  The image $\varphi(U^\circ)$ is an open tubular neighborhood of
\[ \varphi(\gamma\times\{\tfrac{1}{2}\}) = \gamma\times\{\tfrac{1}{2}\}, \]
with oriented meridian $\mu_{f^{-1}}$ and push-off $\lambda_{f^{-1},S}$ of $\gamma\times\{\frac{1}{2}\}$ inside $S_{1/2} \subset M_{f^{-1}}$.  Then we have
\[ \varphi_*([\mu_f]) = -[\mu_{f^{-1}}] \quad\text{and}\quad \varphi_*([\lambda_{f,S}]) = [\lambda_{f^{-1},S}] \]
as elements of $H_1(\varphi(\partial U))$, and so
\[ [\varphi(\alpha)] = -3[\mu_{f^{-1}}] + [\lambda_{f^{-1},S}] \]
in $H_1(\varphi(\partial U))$.  This says that $-M$ is the result of $-3$-surgery on $\gamma\times\{\frac{1}{2}\} \subset M_{f^{-1}}$, with respect to the framing given by $S_{1/2}$, and $\varphi$ extends across the glued-in solid tori to give the desired diffeomorphism.
\end{proof}

\theoremstyle{theorem}
\newtheorem*{main}{Theorem \ref{thm: main}}
\begin{main}
	There is a weakly symplectically fillable contact structure $\xi_0$ on $-M$ that satisfies
	\[ \langle e(\xi_0), [S]\rangle = 2-2g \quad\text{and}\quad \langle e(\xi_0),[F]\rangle = 0. \]
\end{main}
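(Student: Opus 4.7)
The plan is to realize $\xi_0$ as the output of Legendrian surgery starting from the contact structure $\xi_{f^{-1}}$ of Lemma~\ref{lem:contact-torus}. Giroux's Legendrian realization principle applies to $\gamma\times\{\tfrac{1}{2}\}$ on the convex surface $S_{1/2}$: this curve is disjoint from the dividing set $\partial V \times \{\tfrac{1}{2}\}$ since $\gamma \cap V = \emptyset$, and the nonisolating condition holds because $\gamma$ is nonseparating in $S \setminus V$ (equivalent to $c$ being nonseparating in $S \setminus \gamma$). This produces a Legendrian $\gamma_0 \subset S_{1/2}$ whose contact framing coincides with the surface framing $\lambda_S$. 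Stabilizing $\gamma_0$ twice yields a Legendrian $\tilde\gamma$ whose contact framing is $\lambda_S - 2\mu$, so Legendrian surgery on $\tilde\gamma$ smoothly performs the $-3$-slope Dehn surgery of Lemma~\ref{lem:surgery-slope}. Lemma~\ref{lem:stein-cobordism} then provides a Stein cobordism $(W, J)\colon (M_{f^{-1}}, \xi_{f^{-1}}) \to (-M, \xi_0)$ with $\xi_0$ weakly symplectically fillable.

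The first Euler class identity is straightforward: a fiber $S_0 = S\times\{0\}$ is disjoint from the surgery region, so it represents the same homology class in $H_2(W)$ whether it is included through $M_{f^{-1}}$ or through $-M$.  Proposition~\ref{prop:stein-euler-class} and Lemma~\ref{lem:contact-torus} then give
\[
\langle e(\xi_0), [S_0]\rangle = \langle c_1(TW, J), [S_0]\rangle = \langle e(\xi_{f^{-1}}), [S_0]\rangle = 2-2g.
\]

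For the second identity I would apply Lemma~\ref{lem:compute-c1}.  Attach to $F \setminus D^\circ \subset M_{f^{-1}} \setminus U^\circ$ an annulus $A \subset \nu(\tilde\gamma)$ connecting $\partial D$ to $\tilde\gamma$, producing a genus-$2$ Seifert surface $\Sigma$ for $\tilde\gamma$ in $M_{f^{-1}}$; in particular $\tilde\gamma$ is nullhomologous. The capped surface $\hat\Sigma = \Sigma \cup (\text{core of the 2-handle})$ is a closed surface in $W$, and $[\hat\Sigma] = [F]$ in $H_2(W)$, since the difference $A \cup (\text{core}) - D$ is a 2-sphere lying in the simply-connected region formed by $\nu(\tilde\gamma)$, the 2-handle, and the meridional disk $D$ of the new solid torus. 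Lemma~\ref{lem:compute-c1} then gives $\langle e(\xi_0), [F]\rangle = \rot(\tilde\gamma, [\Sigma])$.  Since the Seifert framing induced by $\Sigma$ has slope $\lambda_S - 3\mu$ on $\partial U$ (matching $\partial D$), while the contact framing of $\tilde\gamma$ is $\lambda_S - 2\mu$, one finds $\mathrm{tb}(\tilde\gamma, [\Sigma]) = 1$.

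The main step then is to arrange $\rot(\tilde\gamma, [\Sigma]) = 0$ by a careful choice of stabilization signs, which I expect to be the crux of the argument.  The Eliashberg--Bennequin inequality $\mathrm{tb} + |\rot| \leq -\chi(\Sigma) = 3$ holds for any nullhomologous Legendrian in the tight manifold $(M_{f^{-1}}, \xi_{f^{-1}})$, so $|\rot| \leq 2$ for each of the three possible Legendrians obtained by stabilizing $\gamma_0$ twice. Applying this to the Legendrians $S_+S_+\gamma_0$ and $S_-S_-\gamma_0$, whose rotation numbers with respect to $[\Sigma]$ are $\rot(\gamma_0, [\Sigma]) \pm 2$, forces $\rot(\gamma_0, [\Sigma]) = 0$. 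Taking $\tilde\gamma = S_+S_-\gamma_0$ then yields $\rot(\tilde\gamma, [\Sigma]) = 0$, as required.
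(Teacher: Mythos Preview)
Your proposal is correct and follows essentially the same route as the paper: Legendrian-realize $\gamma\times\{\tfrac12\}$ on the convex fiber, stabilize twice, perform Legendrian surgery, and identify $\langle e(\xi_0),[F]\rangle$ with a rotation number via Lemma~\ref{lem:compute-c1}.  The one noteworthy difference is in the final sandwich step forcing the rotation number to vanish: the paper constructs all three surgered contact structures $\xi_-,\xi_0,\xi_+$ on $-M$ and applies Eliashberg's inequality $|\langle e(\xi_\circ),[F]\rangle|\le -\chi(F)=2$ there, whereas you stay in $(M_{f^{-1}},\xi_{f^{-1}})$ and apply the Bennequin inequality $\mathrm{tb}+|\rot|\le -\chi(\Sigma)=3$ to the three doubly-stabilized Legendrians.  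These inequalities are essentially equivalent in this context, so neither approach buys anything the other does not; yours is arguably slightly more economical since it avoids appealing to the weak fillability (hence tightness) of $\xi_\pm$.  In fact your argument can be shortened further: with respect to $[\Sigma]$ the unstabilized $\gamma_0$ already has $\mathrm{tb}=3$, so Bennequin gives $\rot(\gamma_0,[\Sigma])=0$ directly, with no need to consider $S_+S_+\gamma_0$ and $S_-S_-\gamma_0$ at all.
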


\begin{proof}
Let $\xi_{f^{-1}}$ be the contact structure on the mapping torus $M_{f^{-1}}$ produced by Lemma~\ref{lem:contact-torus}; we continue to write

\[ \Gamma_{S_{1/2}} = \partial V \times \{\tfrac{1}{2}\} \]
for the dividing set of the convex surface $S_{1/2}$.  Since the curve 
\[ \gamma \times \{\tfrac{1}{2}\} \subset S_{1/2} \subset M_{f^{-1}} \]
does not separate the complement $S_{1/2} \setminus \Gamma_{S_{1/2}}$, it is \emph{nonisolating}, and so we can apply the Legendrian realisation principle \cite[Theorem~3.7]{honda-classification1} to find a $C^\infty$-small isotopy of $S_{1/2}$ through convex surfaces in $(M_{f^{-1}},\xi_{f^{-1}})$ which carries $\gamma \times \{\frac{1}{2}\}$ to a Legendrian curve.

Having done this, the new dividing curves on $S_{1/2}$ are a small perturbation of the original dividing curves $c\times\{\pm1\}$, so the newly Legendrian $\lambda = \gamma\times\{\frac{1}{2}\}$ is still disjoint from them.  We know that $\lambda = \gamma \times \{\frac{1}{2}\}$ is nullhomologous in $M_{f^{-1}}$: we can orient it so that $T = F \setminus D^\circ$ is a Seifert surface in $M_{f^{-1}} \setminus U^\circ \cong -M \setminus N^\circ$.  Then $\lambda$ has a rotation number
\[ r_0 = \rot(\lambda,[T]). \]
The contact framing of $\lambda$, defined as the orthogonal complement of $T\lambda$ inside $\xi|_{\lambda}$, is moreover given by a geometric count of intersection points:
\[ \mathrm{tw}\left(\lambda, S_{1/2}\right) = -\frac{\#( \Gamma_{S_{1/2}} \cap \lambda)}{2} = 0. \]
We stabilize $\lambda$ twice in several different ways to produce three new Legendrians, each of them smoothly isotopic to $\lambda$:
\begin{align*}
\lambda_- &= S_-(S_-(\lambda)), &
\lambda_0 &= S_+(S_-(\lambda)), &
\lambda_+ &= S_+(S_+(\lambda)).
\end{align*}
Their rotation numbers satisfy
\begin{align*}
\rot(\lambda_-,[T]) &= r_0-2, \\
\rot(\lambda_0,[T]) &= r_0, \\
\rot(\lambda_+,[T]) &= r_0+2,
\end{align*}
and in particular they are all different.

Since each stabilization lowers the contact framing by one, it follows that a $-1$-surgery on any of $\lambda_-,\lambda_0,\lambda_+$ with respect to the contact framing is smoothly equivalent to a $-3$-surgery on $\lambda$ with respect to the $S_{1/2}$-framing, and thus by Lemma~\ref{lem:surgery-slope} it produces $-M$.  We let
\[ \xi_-, \ \xi_0,\ \xi_+ \]
denote the contact structures on $-M$ resulting from Legendrian surgery on $\lambda_-$, $\lambda_0$, and $\lambda_+$ in $(M_{f^{-1}},\xi_{f^{-1}})$, respectively.  By Lemma~\ref{lem:stein-cobordism} we know that each of these is weakly symplectically fillable, and moreover that if $W$ is the trace of this surgery, viewed as a smooth cobordism from $M_{f^{-1}}$ to $-M$ built by attaching a 2-handle to $M_{f^{-1}} \times [0,1]$, then it admits three different Stein structures
\[ J_-,\ J_0,\ J_+ \]
which realize it as a Stein cobordism from $(M_{f^{-1}},\xi_{f^{-1}})$ to $-M$ equipped with $\xi_-$, $\xi_0$, and $\xi_+$ respectively.  These moreover satisfy $e(\xi_-) = c_1(TW,J_-)|_{-M}$ and likewise for $\xi_0$ and $\xi_+$.

It remains to compute the Euler class of each contact structure $\xi_\circ$, where $\circ \in \{-,0,+\}$.  For the image of a fiber surface $S$, we can take the convex surface $S_0 = S\times\{0\}$ used in Lemma~\ref{lem:contact-torus}, which remains undisturbed by the surgery operation above, to compute that
\[ \langle e(\xi_\circ), [S] \rangle = \underbrace{\chi((S_0)_+)}_{=2-2g} - \underbrace{\chi((S_0)_-)}_{=0} = 2-2g. \]
The evaluation on $[F]$ requires only slightly more effort: Proposition~\ref{prop:stein-euler-class} and Lemma~\ref{lem:compute-c1} say that
\[ \langle e(\xi_\circ), [F] \rangle = \langle c_1(TW,J_\circ), [F] \rangle = \rot(\lambda_\circ,[T]). \]
Now since the surface $F \subset -M$ has genus 2 and each contact structure $\xi_\circ$ is weakly fillable (in particular, tight), we have for each $\circ\in\{-,0,+\}$ the inequality
\[ \left| \langle e(\xi_\circ), [F] \rangle \right| \leq -\chi(F) = 2, \]
due to Eliashberg \cite[Theorem~2.2.1]{eliashberg1992contact}.  But then
\[ \rot(\lambda_+,[T]) = r_0+2 \quad\text{and}\quad \rot(\lambda_-,[T]) = r_0-2 \]
must be between $-2$ and $2$ inclusive, and this is only possible if $r_0 = 0$.  We conclude that
\[ \langle e(\xi_0),[F] \rangle = \rot(\lambda_0,[T]) = r_0 = 0 \]
and so $\xi_0$ is the desired contact structure.
\end{proof}

\begin{remark} \label{rem:virtually-ot}
The contact structure $\xi_0$ of Theorem~\ref{thm: main} is virtually overtwisted.  To see this, we note that it is constructed by Legendrian surgery on a knot $\lambda_0 = S_+(S_-(\lambda))$, which has one stabilization of each sign.  If we let $\mu$ be the image in $-M$ of a meridian of $\lambda_0$, then the lift of $\xi_0$ to any cover $\tilde{M} \to -M$ is overtwisted as long as $\mu$ is not in the image of the inclusion $\pi_1(\tilde{M}) \to \pi_1(-M)$, by the proof of \cite[Proposition~5.1]{gompf1998handlebody}.  Such covers always exist since $\mu$ has intersection number $\pm1$ with the surface $F$ and is therefore homologically essential.
\end{remark}

\bibliographystyle{alpha}
\bibliography{Reference-foliation-and-contact}

\begin{thebibliography}{HKM03}

\bibitem[Ben83]{bennequin1982entrelacements}
Daniel Bennequin.
\newblock Entrelacements et \'{e}quations de {P}faff.
\newblock In {\em Third {S}chnepfenried geometry conference, {V}ol. 1
  ({S}chnepfenried, 1982)}, volume 107 of {\em Ast\'{e}risque}, pages 87--161.
  Soc. Math. France, Paris, 1983.

\bibitem[Bow16]{bowden2016approximating}
Jonathan Bowden.
\newblock Approximating {$C^0$}-foliations by contact structures.
\newblock {\em Geom. Funct. Anal.}, 26(5):1255--1296, 2016.

\bibitem[CGH09]{colin2009finitude}
Vincent Colin, Emmanuel Giroux, and Ko~Honda.
\newblock Finitude homotopique et isotopique des structures de contact tendues.
\newblock {\em Publications Math{\'e}matiques de l'IH{\'E}S}, 109(1):245--293,
  2009.

\bibitem[EH02]{eh-no-fillings}
John~B. Etnyre and Ko~Honda.
\newblock Tight contact structures with no symplectic fillings.
\newblock {\em Invent. Math.}, 148(3):609--626, 2002.

\bibitem[Eli89]{eliashberg1989classification}
Yakov Eliashberg.
\newblock Classification of overtwisted contact structures on 3-manifolds.
\newblock {\em Inventiones mathematicae}, 98(3):623--637, 1989.

\bibitem[Eli90a]{eliashberg-filling}
Yakov Eliashberg.
\newblock Filling by holomorphic discs and its applications.
\newblock In {\em Geometry of low-dimensional manifolds, 2 ({D}urham, 1989)},
  volume 151 of {\em London Math. Soc. Lecture Note Ser.}, pages 45--67.
  Cambridge Univ. Press, Cambridge, 1990.

\bibitem[Eli90b]{eliashberg-stein}
Yakov Eliashberg.
\newblock Topological characterization of {S}tein manifolds of dimension
  {$>2$}.
\newblock {\em Internat. J. Math.}, 1(1):29--46, 1990.

\bibitem[Eli92]{eliashberg1992contact}
Yakov Eliashberg.
\newblock Contact {$3$}-manifolds twenty years since {J}. {M}artinet's work.
\newblock {\em Ann. Inst. Fourier (Grenoble)}, 42(1-2):165--192, 1992.

\bibitem[ET98]{eliashberg1998confoliations}
Yakov~M. Eliashberg and William~P. Thurston.
\newblock {\em Confoliations}, volume~13 of {\em University Lecture Series}.
\newblock American Mathematical Society, Providence, RI, 1998.

\bibitem[Etn03]{etnyre-intro}
John~B. Etnyre.
\newblock Introductory lectures on contact geometry.
\newblock In {\em Topology and geometry of manifolds ({A}thens, {GA}, 2001)},
  volume~71 of {\em Proc. Sympos. Pure Math.}, pages 81--107. Amer. Math. Soc.,
  Providence, RI, 2003.

\bibitem[Etn05]{etnyre-knots}
John~B. Etnyre.
\newblock Legendrian and transversal knots.
\newblock In {\em Handbook of knot theory}, pages 105--185. Elsevier B. V.,
  Amsterdam, 2005.

\bibitem[Gab97]{gabai1997problems}
David Gabai.
\newblock Problems in foliations and laminations.
\newblock In {\em Geometric topology ({A}thens, {GA}, 1993)}, volume~2 of {\em
  AMS/IP Stud. Adv. Math.}, pages 1--33. Amer. Math. Soc., Providence, RI,
  1997.

\bibitem[Gei08]{geiges-book}
Hansj\"{o}rg Geiges.
\newblock {\em An introduction to contact topology}, volume 109 of {\em
  Cambridge Studies in Advanced Mathematics}.
\newblock Cambridge University Press, Cambridge, 2008.

\bibitem[Gir91]{giroux-convexity}
Emmanuel Giroux.
\newblock Convexit\'{e} en topologie de contact.
\newblock {\em Comment. Math. Helv.}, 66(4):637--677, 1991.

\bibitem[Gom98]{gompf1998handlebody}
Robert~E. Gompf.
\newblock Handlebody construction of {S}tein surfaces.
\newblock {\em Ann. of Math. (2)}, 148(2):619--693, 1998.

\bibitem[Gro85]{gromov-holomorphic}
M.~Gromov.
\newblock Pseudo holomorphic curves in symplectic manifolds.
\newblock {\em Invent. Math.}, 82(2):307--347, 1985.

\bibitem[GY20]{gabai2020fullymarked}
David Gabai and Mehdi Yazdi.
\newblock The fully marked surface theorem.
\newblock {\em Acta Math.}, 225(2):369--413, 2020.

\bibitem[HKM03]{hkm-hyperbolic}
Ko~Honda, William~H. Kazez, and Gordana Mati\'{c}.
\newblock Tight contact structures on fibered hyperbolic 3-manifolds.
\newblock {\em J. Differential Geom.}, 64(2):305--358, 2003.

\bibitem[Hon00]{honda-classification1}
Ko~Honda.
\newblock On the classification of tight contact structures. {I}.
\newblock {\em Geom. Topol.}, 4:309--368, 2000.

\bibitem[KR15]{kazez2015approximating}
William~H. Kazez and Rachel Roberts.
\newblock Approximating {$C^{1,0}$}-foliations.
\newblock In {\em Interactions between low-dimensional topology and mapping
  class groups}, volume~19 of {\em Geom. Topol. Monogr.}, pages 21--72. Geom.
  Topol. Publ., Coventry, 2015.

\bibitem[Mas14]{massot-notes}
Patrick Massot.
\newblock Topological methods in 3-dimensional contact geometry.
\newblock In {\em Contact and symplectic topology}, volume~26 of {\em Bolyai
  Soc. Math. Stud.}, pages 27--83. J\'{a}nos Bolyai Math. Soc., Budapest, 2014.

\bibitem[Nov65]{novikov1965topology}
S.~P. Novikov.
\newblock The topology of foliations.
\newblock {\em Trudy Moskov. Mat. Ob\v{s}\v{c}.}, 14:248--278, 1965.

\bibitem[OS04]{ozbagci-stipsicz}
Burak Ozbagci and Andr\'{a}s~I. Stipsicz.
\newblock {\em Surgery on contact 3-manifolds and {S}tein surfaces}, volume~13
  of {\em Bolyai Society Mathematical Studies}.
\newblock Springer-Verlag, Berlin; J\'{a}nos Bolyai Mathematical Society,
  Budapest, 2004.

\bibitem[Ros68]{rosenberg1968foliations}
Harold Rosenberg.
\newblock Foliations by planes.
\newblock {\em Topology}, 7:131--138, 1968.

\bibitem[Thu86]{thurston1986norm}
William~P. Thurston.
\newblock A norm for the homology of {$3$}-manifolds.
\newblock {\em Mem. Amer. Math. Soc.}, 59(339):i--vi and 99--130, 1986.

\bibitem[Wei91]{weinstein-surgery}
Alan Weinstein.
\newblock Contact surgery and symplectic handlebodies.
\newblock {\em Hokkaido Math. J.}, 20(2):241--251, 1991.

\bibitem[Woo69]{wood1969foliations}
John~W. Wood.
\newblock Foliations on {$3$}-manifolds.
\newblock {\em Ann. of Math. (2)}, 89:336--358, 1969.

\bibitem[Yaz20]{yazdi2020thurston}
Mehdi Yazdi.
\newblock On {T}hurston's {E}uler class-one conjecture.
\newblock {\em Acta Math.}, 225(2):313--368, 2020.

\end{thebibliography}

\end{document}